\numberwithin{equation}{section}
\theoremstyle{plain}
\newtheorem{thm}{Theorem}
\newtheorem{prop}[thm]{Proposition}
\newtheorem{cor}[thm]{Corollary}
\newtheorem*{thm*}{Theorem}
\newtheorem*{lem*}{Lemma}
\newtheorem*{prop*}{Proposition}
\newtheorem*{cor*}{Corollary}
\theoremstyle{definition}
\newtheorem*{defn*}{Definition}
\newtheorem{ex}[thm]{Example}
{}
\newtheorem*{rem*}{Remark}
\newtheorem*{war*}{Warning}
\newtheorem*{hyp_plain*}{Hypotheses}
{}
{}
{}
\newtheorem*{ack}{Acknowledgements}{}
{}
\theoremstyle{remark}
{}
{}
{}
\def\to{\longrightarrow} 
\def\ZZ{\mathbb{Z}}
\def\sfD{\mathsf{D}}
\def\sfH{\mathsf{H}}
\DeclareMathOperator{\colim}{colim}
\DeclareMathOperator{\Hom}{Hom}
\DeclareMathOperator{\modu}{\mathsf{mod}}
\DeclareMathOperator{\Tor}{Tor}
\DeclareMathOperator{\coh}{coh}
\DeclareMathOperator{\Spec}{Spec}
\definecolor{internationalkleinblue}{rgb}{0.0, 0.18, 0.65}
\title{Rouquier dimension versus global dimension}
\author{Greg Stevenson}
\address{Greg Stevenson, Aarhus University, Department of Mathematics, Ny Munkegade 118, bldg. 1530
DK-8000 Aarhus C, Denmark
}
\email{greg@math.au.dk}
\begin{document}
\begin{abstract}
\noindent We give an example of a commutative coherent ring of infinite global dimension such that the category of perfect complexes has finite Rouquier dimension.

\end{abstract}
%

\maketitle

\setcounter{tocdepth}{1}





In \cite{neeman2017strong}*{Remark~0.11} Neeman claims that strong generation of the category of perfect complexes over a commutative ring implies finite global dimension (without assuming the ring is noetherian as emphasized in loc.\ cit.\ Remark~0.10). This is a special case of Theorem~0.5 of the same paper which asserts that for $X$ a quasi-compact and separated scheme the category $\sfD^\mathrm{perf}(X)$ is strongly generated if and only if $X$ has a cover by open affine subschemes corresponding to rings of finite global dimension. 

No argument is given for the affine case discussed in the Remark, but there is a reference to Rouquier \cite{rouquier08}*{Proposition~7.25}. However, Rouquier's statement is about the bounded homotopy category of \emph{all} projectives and the bounded derived category of \emph{all} modules. In particular, it does not apply to the question at hand. 

We give an example showing Remark~0.11 and Theorem~0.5 of \cite{neeman2017strong} are false: there is a coherent commutative ring of infinite global dimension such that its category of perfect complexes has Rouquier dimension $0$. On the other hand, we prove that for a coherent ring strong generation of the perfect complexes is equivalent to finite \emph{weak} global dimension.

\section{The examples}

Although our examples will be commutative the general facts we need are valid without this assumption. Let $R$ be a von Neumann regular ring. This is equivalent to the property that every $R$-module is flat. We will make use of the following standard facts.

\begin{prop}
Any von Neumann regular ring $R$ satisfies the following properties:
\begin{itemize}
\item[(i)] $R$ is coherent;
\item[(ii)] every finitely presented $R$-module is projective.
\end{itemize}
\end{prop}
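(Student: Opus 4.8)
The plan is to derive both parts from the structural fact that in a von Neumann regular ring every finitely generated one-sided ideal is generated by an idempotent, i.e.\ is a direct summand of $R$. For this I would first recall the classical element-wise characterization of von Neumann regularity: for each $a \in R$ there is $x \in R$ with $axa = a$. Then for a principal ideal $aR$ the element $e = ax$ is idempotent with $eR = aR$. For a two-generator ideal $aR + bR$ one writes $aR = eR$ with $e$ idempotent, applies the principal case to $(1-e)b$ to obtain $(1-e)bR = fR$ with $f$ idempotent and $ef = 0$, and checks that $g = e + f - fe$ is idempotent with $gR = eR + fR$. An induction on the number of generators then yields the claim for all finitely generated right ideals, and the left-hand version is symmetric.

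Part (i) follows at once: if $I = gR$ with $g$ idempotent, then $R \xrightarrow{\,1-g\,} R \to gR \to 0$ is a presentation of $I$ whose kernel $(1-g)R$ is again cyclic, so $I$ is finitely presented; applying this on both sides shows $R$ is coherent.

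For part (ii) I would offer two routes. The quick one invokes the hypothesis directly: a finitely presented module $M$ fits in $0 \to K \to R^n \to M \to 0$ with $K$ finitely generated, and since every $R$-module is flat, $M$ is flat; a finitely presented flat module over any ring is projective, because flatness makes this sequence pure while finite presentation makes $M$ pure-projective, forcing the sequence to split, so $M$ is a direct summand of $R^n$. The self-contained route instead proves, by induction on $n$, that every finitely generated submodule $K \subseteq R^n$ is a direct summand: the image of $K$ under the projection $R^n \to R$ onto the last coordinate is a finitely generated ideal, hence a summand of $R$ and in particular projective, so $K$ splits as $(K \cap R^{n-1}) \oplus (\text{a copy of that ideal})$; the first summand is a finitely generated submodule of $R^{n-1}$ — finitely generated since it is a direct summand of $K$ — to which the inductive hypothesis applies, and one assembles a complement to $K$ in $R^n$ from the complements produced at the two stages. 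Either way the presentation of $M$ splits and $M$ is projective.

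I expect the only real work to be the bookkeeping in these inductions: verifying $g = e + f - fe$ is idempotent with $gR = eR + fR$, and, in the $R^n$ argument, checking that the pieces genuinely reassemble into a direct sum decomposition $R^n = K \oplus D$. Neither point is deep, but both require a little care with indices. Anyone preferring to avoid the second can simply cite the standard fact that finitely presented flat modules are projective, after which (i) and (ii) are both immediate.
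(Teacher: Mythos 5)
Your proposal is correct, but it is genuinely different from what the paper does: the paper's entire ``proof'' is a citation to Prest's \emph{Purity, spectra and localisation}, Corollary~2.3.23, whereas you give a self-contained argument. Your route through the idempotent-generation of finitely generated one-sided ideals is the classical one and all the steps check out: with $axa=a$ one gets $aR=(ax)R$; in the two-generator step $f\in(1-e)R$ forces $ef=0$, and then $g=e+f-fe$ satisfies $g^2=g$, $ge=e$, $gf=f$, and $fe\in fR$, so indeed $gR=eR+fR=aR+bR$. For (i) note that you only verify that finitely generated right (and left) ideals are finitely presented; this suffices by Chase's characterization of coherence, but it is worth saying so, since the definition via finitely generated submodules of finitely presented modules is also in circulation --- alternatively, your second route for (ii) (finitely generated submodules of $R^n$ are direct summands) delivers the stronger statement directly. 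For (ii), the quick route is legitimate here because the paper itself adopts ``every $R$-module is flat'' as the working characterization of von Neumann regularity, and ``finitely presented $+$ flat $\Rightarrow$ projective'' via purity is standard. What your approach buys is independence from the reference and transparency about which characterization of regularity is doing the work at each step; what the citation buys is brevity, which is appropriate given that the proposition is entirely standard background for the paper's actual examples.
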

\begin{proof}
See \cite{Prestbook}~{Corollary~2.3.23}.
\end{proof}

Thus it makes sense to talk about $\sfD^\mathrm{b}(\modu R)$ the bounded derived category of finitely presented $R$-modules. This turns out to be simple, at least from the perspective of generation, because of the projectivity of finitely presented modules.

\begin{prop}\label{prop:Rdim}
Every object of $\sfD^\mathrm{b}(\modu R)$ is formal. In particular, $\sfD^\mathrm{b}(\modu R) = \sfD^\mathrm{perf}(R)$ has Rouquier dimension $0$.
\end{prop}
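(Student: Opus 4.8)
The plan is to show that every object of $\sfD^\mathrm{b}(\modu R)$ is isomorphic in the derived category to the direct sum of its shifted cohomology modules, and then to deduce the statements about Rouquier dimension. The key input is Proposition~2.1(ii): since $R$ is von Neumann regular, every finitely presented module is projective, so the cohomology modules of a bounded complex of finitely presented modules are themselves finitely generated projective $R$-modules.

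First I would take a bounded complex $X$ of finitely presented $R$-modules. Its cohomology modules $H^n(X)$ are subquotients of finitely presented modules; since $R$ is coherent these are finitely presented, hence projective by Proposition~2.1(ii). Now I would run the standard argument that over a ring all of whose relevant modules are projective, a bounded complex splits as the sum of its cohomology. Concretely, in each degree the cocycle submodule $Z^n \subseteq X^n$ and the coboundary submodule $B^n \subseteq Z^n$ fit into short exact sequences $0 \to Z^n \to X^n \to B^{n+1} \to 0$ and $0 \to B^n \to Z^n \to H^n(X) \to 0$; projectivity of $B^{n+1}$ and of $H^n(X)$ makes both sequences split. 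Choosing splittings degreewise lets one build a quasi-isomorphism (indeed an isomorphism of complexes up to homotopy equivalence) between $X$ and $\bigoplus_n H^n(X)[-n]$ with zero differential. Thus every object of $\sfD^\mathrm{b}(\modu R)$ is formal.

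Next I would address the two assertions. For $\sfD^\mathrm{b}(\modu R) = \sfD^\mathrm{perf}(R)$: a bounded complex of finitely generated projectives is by definition perfect, so $\sfD^\mathrm{perf}(R) \subseteq \sfD^\mathrm{b}(\modu R)$; conversely, by the formality just established, any object of $\sfD^\mathrm{b}(\modu R)$ is a finite sum of shifts of finitely generated projective modules, hence perfect. For the Rouquier dimension: $\sfD^\mathrm{perf}(R)$ is generated as a thick subcategory by $R$ itself (every finitely generated projective is a summand of a finite free module), and by formality every object is a finite direct sum of shifts of summands of free modules, i.e.\ is built from $\bigoplus_i R[n_i]$ using only finite coproducts, shifts, and retracts --- no cones are needed. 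That is exactly the statement that $R$ is a strong generator with generation time $0$, so the Rouquier dimension is $0$.

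The only point requiring a little care --- and the step I expect to be the main obstacle --- is verifying that the degreewise splittings can be assembled coherently into an actual isomorphism in $\sfD^\mathrm{b}(\modu R)$, i.e.\ that the formality is canonical enough to be stated globally rather than just object-by-object. This is a routine but slightly fiddly diagram chase using the splittings of the two families of short exact sequences above; alternatively one can invoke the general fact that a complex whose differentials all have projective (equivalently, split) image and kernel is homotopy equivalent to its cohomology. I would phrase the argument via this general homological lemma to keep the write-up clean.
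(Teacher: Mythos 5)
Your proposal is correct, and it reaches formality by a genuinely different mechanism than the paper. The paper does not split the complex degreewise: instead it observes that, because each $\sfH^i(X)$ is projective, the canonical map $\Hom(\Sigma^{-i}\sfH^i(X),X)\to\Hom(\sfH^i(X),\sfH^i(X))$ is an isomorphism, so the identity of $\sfH^i(X)$ lifts to a morphism $\Sigma^{-i}\sfH^i(X)\to X$ in the derived category; summing over $i$ gives a map $\bigoplus_i\Sigma^{-i}\sfH^i(X)\to X$ that is the identity on cohomology, hence a quasi-isomorphism. This completely sidesteps the step you single out as the main obstacle (assembling the degreewise splittings of $0\to Z^n\to X^n\to B^{n+1}\to 0$ and $0\to B^n\to Z^n\to \sfH^n(X)\to 0$ into a single homotopy equivalence): the paper's map is canonical and needs no choices beyond the identity on each cohomology group. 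Your chain-level route does work --- it is the classical splitting argument --- but two small points deserve care in the write-up. First, "subquotients of finitely presented modules are finitely presented over a coherent ring" is not true in that generality; what you actually need is that $Z^n$ and $B^n$ are kernels and images of maps between finitely presented modules, which are finitely presented by coherence (or simply that cohomology of an object of $\sfD^\mathrm{b}(\modu R)$ is computed in the abelian category $\modu R$). Second, the general lemma you propose to invoke (a bounded complex with split cycles, boundaries and cohomology is homotopy equivalent to its cohomology) is correct and standard, but if you want a clean write-up the paper's one-line construction of the comparison map is the cheaper option. Your deduction of $\sfD^\mathrm{b}(\modu R)=\sfD^\mathrm{perf}(R)$ and of Rouquier dimension $0$ (everything lies in the closure of $\{\Sigma^n R\}$ under finite sums and summands, with no cones) agrees with the paper.
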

\begin{proof}
Let $X$ be an object of $\sfD^\mathrm{b}(\modu R)$. By (ii) of the previous proposition the cohomology groups of $X$ are projective. In particular, for each $i$ we have an isomorphism $\Hom(\Sigma^{-i}\sfH^i(X), X) \cong \Hom(\sfH^i(X),\sfH^i(X))$ of morphism spaces in $\sfD^\mathrm{b}(\modu R)$ and so we get a canonical map
\[
\bigoplus_{i\in\ZZ}\Sigma^{-i}\sfH^i(X) \to X,
\]
which induces the identity on each cohomology group, i.e.\ is a quasi-isomorphism. Thus $X$ is formal.

The other conclusions follow directly: we have just proved that every bounded complex of finitely presented $R$-modules is quasi-isomorphic to a sum of shifts of finitely generated projectives and hence perfect. This also implies that $R$ generates under shifts and summands and hence the perfect complexes have Rouquier dimension $0$.
\end{proof}

Now let us specialize to some particular choices of $R$. Our source for the first two examples is \cite{OsofskyVNR}, see in particular Section~3 therein. In each case we consider $R = \prod_\lambda k$ for a field $k$ and some choice of cardinal $\lambda$. We note that any ring of this form is coherent and von Neumann regular.

\begin{ex}
Suppose that we take $\lambda = \aleph_\omega$. Then $R$ has infinite global dimension. 
\end{ex}

\begin{ex}
Suppose that we take $\lambda = \aleph_0$. Then the global dimension of $R$ is $k+1$ if $2^{\aleph_0} = \aleph_k$ for $k$ a natural number, and it is infinite if $2^{\aleph_0} \geq \aleph_\omega$. 
\end{ex}

In fact one can produce (commutative) von Neumann regular rings of any global dimension without worrying about one's model of set theory.

\begin{ex}
By \cite{MR0229695} the free Boolean algebra on $\aleph_n$ generators has global dimension $n+1$.
\end{ex}

We conclude that finiteness of the Rouquier dimension of $\sfD^\mathrm{perf}(R)$ cannot determine whether or not $R$ has finite global dimension. In fact, all of these examples have $\sfD^\mathrm{perf}(R)$ of dimension $0$ by Proposition~\ref{prop:Rdim}. We also see that even when $R$ does have finite global dimension it may have nothing to do with the Rouquier dimension of $\sfD^\mathrm{perf}(R)$.



\section{A corrected statement}

Let us give a corrected statement for coherent rings (not necessarily commutative). We fix a right coherent ring $A$ and work with right $A$-modules. The point is that, in this generality, $\sfD^\mathrm{perf}(A)$ can only tell us about the \emph{weak} global dimension, i.e.\ the supremum of the flat dimensions of all $A$-modules. In the noetherian case this coincides with the global dimension and so one recovers the expected result.

\begin{thm}\label{thm:affine}
Let $A$ be a right coherent ring. Then the following are equivalent:
\begin{itemize}
\item[(i)] $A$ has finite weak global dimension;
\item[(ii)] there is a uniform finite bound on the projective dimension of finitely presented $A$-modules;
\item[(iii)] $\sfD^\mathrm{perf}(A)$ has finite Rouquier dimension.
\end{itemize}
\end{thm}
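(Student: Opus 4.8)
The plan is to prove the implications (i) $\Rightarrow$ (ii) $\Rightarrow$ (iii) $\Rightarrow$ (i). The equivalence of (i) and (ii) is pure coherent–ring homological algebra. For (i) $\Rightarrow$ (ii), use that the weak global dimension of $A$ equals the supremum of the flat dimensions of the finitely presented modules (since $\Tor$ commutes with filtered colimits and every module is a filtered colimit of finitely presented ones). If this supremum is $d$ and $M$ is finitely presented, then right coherence guarantees a resolution of $M$ by finitely generated projectives in which every syzygy $\Omega^j M$ is again finitely presented, so $\Omega^d M$ is finitely presented and of flat dimension $0$, hence flat; as a finitely presented flat module is projective, $\pdim M \le d$. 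Since always $\fdim M \le \pdim M$, in fact $\pdim M = \fdim M$ for finitely presented $M$, and (ii) holds with the same bound; reading this backwards gives (ii) $\Rightarrow$ (i).

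For (ii) $\Rightarrow$ (iii): a uniform bound $d$ on projective dimensions makes every finitely presented module, hence every bounded complex of finitely presented modules, perfect, so $\sfD^\mathrm{perf}(A) = \sfD^\mathrm{b}(\modu A)$, with the standard $t$-structure restricting to a heart $\modu A$ of global dimension $\le d$ and projective generator $A$; one must then produce a classical generator together with a finite bound on the number of cones needed. I expect this to be the \emph{main obstacle}. The naive approach — resolve each cohomology module of a perfect complex in $d+1$ steps and reassemble via its Postnikov tower — only bounds the number of cones by a multiple of the amplitude, and the amplitude is unbounded even among indecomposable perfect complexes (already for $A = \base[x,y]$), so one needs a construction that does not build $X$ out of its cohomology. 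The natural device is a finite resolution of the diagonal bimodule: if $A$ admits a projective $A$-bimodule resolution of length controlled by $d$, convolution with it expresses $\id_{\sfD^\mathrm{perf}(A)}$ as a bounded iterated extension of functors landing in $\langle A\rangle_1$, hence puts every perfect complex in $\langle A\rangle_{N}$ for $N$ depending only on $d$. Making this work for an arbitrary coherent ring — choosing a workable base and controlling the homological dimension of $A$ over $A\otimes A$ — is the technical heart; absent a clean bimodule argument one would instead argue by a devissage tailored to coherent rings, or appeal to results relating strong generation of $\sfD^\mathrm{perf}(A)$ to finiteness properties of $\sfD(A)$.

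For (iii) $\Rightarrow$ (i), first normalise: if $\sfD^\mathrm{perf}(A) = \langle G\rangle_m$ then $G$, being perfect, lies in $\langle A\rangle_\ell$ for some $\ell$, so $\langle G\rangle_m \subseteq \langle A\rangle_{m\ell} \subseteq \sfD^\mathrm{perf}(A)$ and hence $\sfD^\mathrm{perf}(A) = \langle A\rangle_n$ with $n = m\ell$. Now invoke the ghost lemma: calling a map an $A$-ghost when it induces zero on all cohomology, any composite of $n$ $A$-ghosts out of an object of $\langle A\rangle_n$ vanishes. For a finitely presented $M$ with a resolution $\cdots\to P_1\to P_0\to M$ by finitely generated projectives, the brutal truncations $R_k = (P_k\to\cdots\to P_0)$ are perfect, hence lie in $\langle A\rangle_n$, and the connecting maps of the syzygy triangles compose to an $A$-ghost chain $M \to \Sigma\Omega^1M \to\cdots\to\Sigma^k\Omega^kM$ representing the $k$-fold Yoneda extension; this composite vanishes exactly when $M$ is a direct summand in $\sfD(A)$ of $R_{k-1}$, i.e. when $\pdim M < k$. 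Thus if $M$ is perfect, the ghost lemma applied to $M \in \langle A\rangle_n$ forces $\pdim M \le n-1$. It remains to see that every finitely presented $M$ is perfect, equivalently that $\sfD^\mathrm{b}(\modu A) = \sfD^\mathrm{perf}(A)$: since the $R_k$ are all perfect while an $M$ with $\pdim M = \infty$ is a summand of none of them, one must extract a contradiction with $\sfD^\mathrm{perf}(A) = \langle A\rangle_n$ — a direct cohomological-amplitude estimate fails here because the $R_k$ have amplitude growing in $k$, so this, like the construction in (ii) $\Rightarrow$ (iii), is where I expect to spend the most effort (a ghost/level comparison across the $R_k$, or establishing finiteness of the weak global dimension first). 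Once every finitely presented module has finite, uniformly bounded projective dimension, (ii) holds, and with it (i).
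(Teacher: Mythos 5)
Your treatment of (i) $\Leftrightarrow$ (ii) is complete and is essentially the paper's own argument (syzygies of finitely presented modules over a right coherent ring are finitely presented, finitely presented flat modules are projective, and $\Tor$ commutes with filtered colimits). The two places you flag as ``the main obstacle'' are, however, genuine gaps, and flagging them does not close them. For (ii) $\Rightarrow$ (iii), the identification $\sfD^\mathrm{perf}(A)=\sfD^\mathrm{b}(\modu A)$ is correct, but the missing ingredient is not a resolution of the diagonal: it is the converse of the ghost lemma, in the form of \cite{MR1626856}*{Theorem~8.3}, which bounds the generation time of a bounded complex with respect to $A$ by the (uniformly bounded) projective dimensions of its cohomology modules, independently of amplitude. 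Your proposed bimodule route is a dead end in this generality: a right coherent ring of finite weak global dimension need not be an algebra over a commutative base in any useful way, and even when it is, a uniform bound on $\pdim M$ for finitely presented $M$ does not give the diagonal bimodule finite projective dimension over $A\otimes A^{\mathrm{op}}$ --- that is a smoothness condition strictly stronger than (ii).

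For (iii) $\Rightarrow$ (i), your normalisation to $\sfD^\mathrm{perf}(A)=\langle A\rangle_n$ and the ghost-lemma bound $\pdim M\le n-1$ for those finitely presented $M$ that are already perfect are fine, and agree with the paper (which cites \cite{MR2258043}*{Lemma~2.4} for the uniform bound). But the entire content of the implication is the step you leave open: that finite Rouquier dimension of $\sfD^\mathrm{perf}(A)$ forces $\sfD^\mathrm{b}(\modu A)=\sfD^\mathrm{perf}(A)$. This cannot be extracted by comparing the brutal truncations $R_k$ inside $\langle A\rangle_n$; the paper obtains it from a representability argument, combining \cite{rouquier08}*{Theorem~4.16} (strong generation of the compact objects makes suitable locally finite cohomological functors representable by compacts) with \cite{GreenleesStevenson17}*{Proposition~4.7}, which places $\sfD^\mathrm{b}(\modu A)$ for a coherent ring among the functors to which that representability applies. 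Without an input of this kind your argument only shows that the finitely presented modules which happen to be perfect have uniformly bounded projective dimension, which does not yield (ii) or (i). So the skeleton of your proof matches the paper's, but both nontrivial implications are missing exactly the external theorems that make them work.
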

\begin{proof}
We begin with the equivalence of (i) and (ii). Let us suppose that $A$ has finite weak dimension, say $n$, and let $M$ be finitely presented. Since $A$ is coherent, $M$ has a resolution by finitely presented projective $A$-modules. By assumption the flat dimension of $M$ is at most $n$ and so the syzygy theorem tells us that the $n$th syzygy in such a resolution is flat. Thus this syzygy is flat and finitely presented, and hence is projective. This proves that $M$ has projective dimension at most $n$.

On the other hand, suppose there is a uniform bound $n$ on the projective dimension of the finitely presented $A$-modules. Any module $N$ is a filtered colimit of finitely presented $A$-modules $N = \colim_i M_i$ and we know, since each $M_i$ has projective dimension at most $n$, that
\[
\Tor^A_{n+1}(N,-) = \Tor^A_{n+1}(\colim_i M_i,-) =  \colim_i\Tor^A_{n+1}(M_i,-) = 0.
\]
Thus $N$ has flat dimension at most $n$ and $A$ has finite weak dimension as claimed.

Now let us prove the equivalence of (ii) and (iii). Starting with the assumption of (ii) we deduce that $\sfD^\mathrm{perf}(A)$ contains $\modu A$ and so coincides with $\sfD^\mathrm{b}(\modu A)$. It follows from \cite{MR1626856}*{Theorem~8.3} that $\sfD^\mathrm{perf}(A) = \sfD^\mathrm{b}(\modu A)$ has finite Rouquier dimension.

Now let us suppose that (iii) holds. Combining \cite{GreenleesStevenson17}*{Proposition~4.7} with \cite{rouquier08}*{Theorem~4.16} gives that $\sfD^\mathrm{perf}(A) = \sfD^\mathrm{b}(\modu A)$. In particular, every finitely presented $A$-module has finite projective dimension. Moreover, there is a uniform bound $n$ on the projective dimension of the finitely presented modules\textemdash{}this follows from the ghost lemma (cf.\ \cite{MR2258043}*{Lemma~2.4}). This is precisely condition (ii).
\end{proof}

Using an adjusted version of Neeman's original argument we can extend this to the non-affine setting. We refer to \cite{neeman2017strong} for the notation used in the proof.

\begin{cor}
Let $X$ be a quasi-compact, separated, and coherent scheme. Then the following are equivalent:
\begin{itemize}
\item[(i)] $X$ has finite weak global dimension, i.e.\ there is a uniform bound on the lengths of flat resolutions of quasi-coherent sheaves on $X$;
\item[(ii)] $X$ can be covered by open affine subschemes $\Spec(R_i)$ with each $R_i$ of finite weak global dimension.
\item[(iii)] $\sfD^\mathrm{perf}(X)$ has finite Rouquier dimension.
\end{itemize}
\end{cor}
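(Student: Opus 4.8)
The plan is to follow the proof of \cite{neeman2017strong}*{Theorem~0.5}, replacing ``finite global dimension'' by ``finite weak global dimension'' throughout and substituting Theorem~\ref{thm:affine} for the erroneous affine assertion of \cite{neeman2017strong}*{Remark~0.11}. Two elementary observations are used repeatedly: coherence of the structure sheaf is Zariski-local, so every open subscheme of a coherent scheme is again coherent; and finiteness of the weak global dimension passes to affine opens, because if $\Spec R' \subseteq \Spec R$ is an affine open and $R$ has finite weak global dimension then $\Spec R'$ is covered by principal opens $D(f) = \Spec R_f$ and localisation does not raise flat dimension, so each $R_f$ — and hence $R'$ — has finite weak global dimension.

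The equivalence of (i) and (ii) is the statement that the weak global dimension of $X$ is local, involving no triangulated categories. By quasi-compactness fix a finite affine cover $X = \bigcup_{i=1}^{m} \Spec R_i$. If each $R_i$ has weak global dimension at most $n$, then every quasi-coherent sheaf on $X$ has a flat resolution of length at most $n$: since $\QCoh(X)$ has enough flat objects one may choose a flat resolution, and its $n$-th syzygy is flat because flatness of a quasi-coherent sheaf can be checked on the cover; this is (i). Conversely, suppose $X$ has finite weak global dimension, let $j\colon \Spec R_i \hookrightarrow X$ be one of the inclusions — an affine morphism, since $X$ is separated, so that $j_*$ is exact on quasi-coherent sheaves — and let $M$ be an $R_i$-module. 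A bounded flat resolution of $j_*\widetilde{M}$ on $X$, pulled back along the exact and flatness-preserving functor $j^*$ (for which $j^*j_*\widetilde{M} \cong \widetilde{M}$), is a bounded flat resolution of $\widetilde{M}$; so $R_i$ has finite weak global dimension, which is (ii).

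For (ii) $\Rightarrow$ (iii) we run the induction at the heart of Neeman's argument, on the size $m$ of the affine cover $X = \bigcup_{i=1}^{m}\Spec R_i$. The case $m=1$ is precisely Theorem~\ref{thm:affine}. For $m \geq 2$ put $U = \Spec R_m$, $V = \bigcup_{i<m}\Spec R_i$ and $W = U \cap V = \bigcup_{i<m}(U \cap \Spec R_i)$. Each of $U$, $V$ and $W$ is quasi-compact, separated and coherent; $V$ is covered by the $m-1$ affine opens $\Spec R_i$ with $i < m$, and $W$ by the $m-1$ affine opens $U \cap \Spec R_i$, which are affine because $X$ is separated and of finite weak global dimension because they are affine opens of $\Spec R_m$. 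By induction $\sfD^\mathrm{perf}(U)$, $\sfD^\mathrm{perf}(V)$ and $\sfD^\mathrm{perf}(W)$ all have finite Rouquier dimension, and Neeman's Zariski--Mayer--Vietoris gluing argument for perfect complexes — which uses only the strong generation of the perfect complexes on the members of a Zariski cover, not the reason for it — then shows that $\sfD^\mathrm{perf}(X)$ has finite Rouquier dimension, completing the induction.

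For (iii) $\Rightarrow$ (ii), fix any affine open $\Spec R_i \subseteq X$. By Thomason--Trobaugh the restriction functor $\sfD^\mathrm{perf}(X) \to \sfD^\mathrm{perf}(\Spec R_i)$ is, after splitting idempotents, a Verdier localisation; being exact and essentially surjective up to direct summands, it carries a strong generator of $\sfD^\mathrm{perf}(X)$ to a strong generator of $\sfD^\mathrm{perf}(\Spec R_i)$. Hence $\sfD^\mathrm{perf}(\Spec R_i)$ has finite Rouquier dimension, so $R_i$ has finite weak global dimension by Theorem~\ref{thm:affine}; since this holds for every affine open of $X$, in particular for those of a cover, we obtain (ii). The step I expect to require genuine care is (ii) $\Rightarrow$ (iii): one has to verify that Neeman's gluing step in \cite{neeman2017strong} is truly insensitive to whether the local input is packaged via global or weak global dimension — which it is, because that step only uses strong generation of the relevant perfect complexes — and that the auxiliary opens $U$, $V$, $W$ appearing in the recursion remain coherent schemes of finite weak global dimension, which they do by the locality facts noted at the outset.
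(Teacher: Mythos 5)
Your proposal is correct and follows essentially the same route as the paper: the (i)$\Leftrightarrow$(ii) equivalence via Zariski-locality of flatness, (iii)$\Rightarrow$(ii) by restricting to affine opens (a Verdier quotient up to idempotent completion, which cannot raise Rouquier dimension) and invoking Theorem~\ref{thm:affine}, and (ii)$\Rightarrow$(iii) by Neeman's induction on the size of the cover with the Mayer--Vietoris gluing of $U$, $V$, $W$. The only cosmetic difference is that you spell out the locality bookkeeping (coherence and finite weak global dimension of the auxiliary opens) a little more explicitly than the paper does.
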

\begin{proof}
The equivalence of (i) and (ii) is straightforward since $X$ is quasi-compact and flatness and exactness are Zariski local. The statement (iii) implies (ii) is also straightforward given Theorem~\ref{thm:affine}: suppose that $\sfD^\mathrm{perf}(X)$ has finite Rouquier dimension and let $\Spec(R)$ be an open affine subscheme of $X$. Then, up to idempotent completion, $\sfD^\mathrm{perf}(R)$ is a Verdier quotient of $\sfD^\mathrm{perf}(X)$ and so the Rouquier dimension of the former is bounded above by the latter. In particular, $\sfD^\mathrm{perf}(R)$ has finite Rouquier dimension, and it is coherent by assumption, so by Theorem~\ref{thm:affine} the ring $R$ has finite weak global dimension.

Now let us suppose that (ii) holds and prove (iii). We start with the assumption that $X$ has a finite cover by open affine subschemes $\Spec(R_i)$ with each $R_i$ coherent and of finite weak global dimension, and hence with $\sfD^\mathrm{perf}(R_i)$ of finite Rouquier dimension by Theorem~\ref{thm:affine}. We proceed, as in the proof of \cite{neeman2017strong}*{Theorem~2.1}, by induction on the number of open affines in the cover. Theorem~\ref{thm:affine} serves as the base case.

Suppose then that we know the result for covers by $n$ open affines and let $R_i$ with $i=1,\ldots,n+1$ give a cover of $X$ by coherent open affine subchemes $U_i$ of finite weak global dimension. We set $U = U_{n+1}$, $V = \bigcup_{i=1}^n U_i$ and $W = U\cap V$, with inclusions $j_U$, $j_V$ and $j_{W}$, and let $G_U, G_V, G_W$ and $G$ be generators for the categories of perfect complexes on $U,V,W$ and $X$. The subscheme $W$ has a cover by the open affines $U_i\cap U$ for $i=1,\ldots,n$ and so the inductive hypothesis applies to it as well as to $V$. By \cite{neeman2017strong}*{Theorem~6.2} there exists an $N$ such that each of $\mathbf{R}j_{U*}G_U$, $\mathbf{R}j_{V*}G_V$, and $\mathbf{R}j_{W*}G_W$ lie in $\mathrm{Coprod}_N(G(-\infty,\infty))$. By the induction hypothesis $\sfD^\mathrm{perf}(U)$, $\sfD^\mathrm{perf}(V)$, and $\sfD^\mathrm{perf}(W)$ have finite Rouquier dimension and so there is an $M$ such that $\sfD^\mathrm{perf}(Y) \subseteq \mathrm{Coprod}_M(G_Y(-\infty,\infty))$ for $Y\in \{U,V,W\}$. Combining these statements we deduce that each of $\mathbf{R}j_{U*}\sfD^\mathrm{perf}(U)$, $\mathbf{R}j_{V*}\sfD^\mathrm{perf}(V)$ and $\mathbf{R}j_{W*}\sfD^\mathrm{perf}(W)$ are contained in $\mathrm{Coprod}_{MN}(G(-\infty,\infty))$.

Using the Mayer-Vietoris triangle for the cover $X = U\cup V$ we see that 
\[
\sfD^\mathrm{perf}(X) \subseteq \mathbf{R}j_{W*}\sfD^\mathrm{perf}(W)\star(\mathbf{R}j_{U*}\sfD^\mathrm{perf}(U) \oplus \mathbf{R}j_{V*}\sfD^\mathrm{perf}(V)) \subseteq \mathrm{Coprod}_{2MN}(G(-\infty,\infty)).
\]
It then follows from \cite{neeman2017strong}*{Proposition~1.9(i)} that $\sfD^\mathrm{perf}(X)$ is generated in at most $2MN$ steps by $G$ and so has finite Rouquier dimension.
\end{proof}

\begin{ack}
Thanks are due to Isambard Goodbody, Srikanth Iyengar, Henning Krause, Amnon Neeman, Kabeer Manali Rahul, and Jeremy Rickard.
\end{ack}




\begin{thebibliography}{10}

\bibitem{MR1626856}Christensen, J. Ideals in triangulated categories: phantoms, ghosts and skeleta. {\em Adv. Math.}. \textbf{136}, 284-339 (1998)
\bibitem{GreenleesStevenson17}Greenlees, J. \& Stevenson, G. Morita theory and singularity categories. {\em Adv. Math.}. \textbf{365} pp. 107055, 51 (2020)
\bibitem{MR2258043}Krause, H. \& Kussin, D. Rouquier's theorem on representation dimension. {\em Trends In Representation Theory Of Algebras And Related Topics}. \textbf{406} pp. 95-103 (2006)
\bibitem{neeman2017strong}Neeman, A. Strong generators in ${\sfD}^{\mathrm{perf}}({X})$ and ${\sfD}^\mathrm{b}_{\coh}({X})$. {\em Ann. Of Math. (2)}. \textbf{193}, 689-732 (2021)
\bibitem{OsofskyVNR}Osofsky, B. Homological dimension and cardinality. {\em Trans. Amer. Math. Soc.}. \textbf{151} pp. 641-649 (1970)
\bibitem{MR0229695}Pierce, R. The global dimension of Boolean rings. {\em J. Algebra}. \textbf{7} pp. 91-99 (1967)
\bibitem{Prestbook}Prest, M. Purity, spectra and localisation. (Cambridge University Press, Cambridge,2009)
\bibitem{rouquier08}Rouquier, R. Dimensions of triangulated categories. {\em J. K-Theory}. \textbf{1}, 193-256 (2008)

\end{thebibliography}

\bibliographystyle{amsplain}

\end{document}